\theoremstyle{plain}
    \newtheorem{theorem}{Theorem}[section]
    \newtheorem{lemma}[theorem]{Lemma}
    \newtheorem{corollary}[theorem]{Corollary}
    \newtheorem{proposition}[theorem]{Proposition}
 \theoremstyle{definition}
    \newtheorem{definition}[theorem]{Definition}
    \newtheorem{example}[theorem]{Example}
    \newtheorem{remark}[theorem]{Remark}
\DeclareMathOperator{\Hom}{Hom}
\DeclareMathOperator{\End}{End}
\DeclareMathOperator{\Ad}{Ad}
\DeclareMathOperator{\Res}{Res}
\DeclareMathOperator{\SO}{SO}
\DeclareMathOperator{\Spin}{Spin}
\DeclareMathOperator{\Crit}{Crit}
\DeclareMathOperator{\DInd}{D-Ind}
\DeclareMathOperator{\DRes}{D-Res}
\DeclareMathOperator{\se}{se}
\begin{document}

\title
 {Formal geometric quantisation for proper~actions} 

\author{Peter Hochs and Varghese Mathai}




\newcommand{\kg}{\mathfrak{g}}
\newcommand{\kk}{\mathfrak{k}}
\newcommand{\kp}{\mathfrak{p}}
\newcommand{\gse}{\mathfrak{g}^*_{\mathrm{se}}}
\newcommand{\kse}{\mathfrak{k}^*_{\mathrm{se}}}
\newcommand{\Ldom}{\Lambda_+}

\newcommand{\C}{\mathbb{C}}
\newcommand{\R}{\mathbb{R}}
\newcommand{\Z}{\mathbb{Z}}
\newcommand{\N}{\mathbb{N}}

\newcommand{\HH}{\mathcal{H}}
\newcommand{\cO}{\mathcal{O}}

\newcommand{\Qform}{Q^{-\infty}}
\newcommand{\Mom}{(M, \omega)}
\newcommand{\Nnu}{(N, \nu)}

\newcommand{\DIndstar}{\bigl(\DInd_K^G\bigr)^*}

\newcommand{\KK}{K \! K}

\maketitle

\begin{abstract}
We define formal geometric quantisation for proper Hamiltonian actions by possibly noncompact groups on possibly noncompact, prequantised symplectic manifolds, generalising work of Weitsman and Paradan. We study the functorial properties of this version of formal geometric quantisation, and relate it to a recent result by the authors via a version of the shifting trick. For (pre)symplectic manifolds of a certain form, quantisation commutes with reduction, in the sense that formal quantisation equals a more direct version of quantisation.
\end{abstract}

\tableofcontents


\section*{Introduction}

Consider  a Hamiltonian action by a compact Lie group $K$ on a possibly noncompact prequantised symplectic manifold $(N, \nu)$, with proper momentum map. Weitsman \cite{Weitsman} defined the \emph{formal geometric quantisation} of this action, which by definition commutes with reduction:
\[
\Qform_K(N, \nu) = \sum_{\lambda \in \Ldom^K} Q(N_{\lambda}) [\pi^K_{\lambda}],
\]
where $\Ldom^K$ is the set of dominant integral weights of $K$, with respect to a maximal torus and positive root system, and $\pi^K_{\lambda}$ is the irreducible representation of $K$ with highest weight $\lambda \in \Ldom^K$. 
For such $\lambda$, $N_{\lambda}$ is the symplectic reduction of the given action at $\lambda/i$ (which is a symplectic orbifold if $\lambda/i$ is a regular value of the momentum map).
Formal geometric quantisation takes values in the abelian group
\[
R^{-\infty}(K) = \Hom_{\Z}(R(K), \Z),
\]
where $R(K)$ is the representation ring of $K$.

Paradan \cite{Paradan1} proved that formal geometric quantisation is functorial with respect to Cartesian products and restriction to subgroups. These two properties imply that it is compatible with the $R(K)$-module structure on $R^{-\infty}(K)$. Ma and Zhang \cite{MaZ,MaZ2}, and also Paradan \cite{Paradan2} proved that quantisation commutes with reduction, in the sense that
\[
Q_K(N, \nu) = \Qform_K(N, \nu),
\]
for a certain definition of the quantisation $Q_K(N, \nu) \in R^{-\infty}(K)$.

On the other hand, Landsman \cite{Landsman} proposed a definition of geometric quantisation of a Hamiltonian action by a Lie group $G$ on a prequantised symplectic manifold $(M, \omega)$, if the orbit space $M/G$ is compact.
He used the \emph{analytic assembly map} from the Baum--Connes conjecture \cite{BCH}, which takes values in the \emph{$K$-theory} group $K_*(C^*_{(r)}G)$ of the (full or reduced) $C^*$-algebra of the group $G$. Applying this assembly map to a Dirac operator coupled to a prequantum line bundle yields Landsman's definition of
\begin{equation} \label{eq def quant Landsman}
Q_G(M, \omega) \in K_*(C^*_{(r)}G).
\end{equation}
Mathai and Zhang \cite{MZ} showed that Landsman's version of quantisation commutes with reduction at the trivial representation, at least if one multiplies the symplectic form $\omega$ by a large enough integer. For (possibly only presymplectic) manifolds of the form $M=G\times_K N$, with $N$ a compact prequantised Hamiltonian $K$-manifold, it was shown in \cite{HochsPresympl} that
\begin{equation} \label{eq [Q,R]=0 cocpt}
Q_G(M, \omega) = \sum_{\lambda \in \Ldom^K} Q(M_{\lambda + \rho_c}, \omega_{\lambda + \rho_c})[\lambda].
\end{equation}
Here $[\lambda]$ is a certain generator of $K_*(C^*_rG)$, $\rho_c$ is half the sum of the compact positive roots, and  $(M_{\lambda + \rho_c}, \omega_{\lambda + \rho_c})$ is the symplectic reduction of the action at $(\lambda + \rho_c)/i$. The shift over $\rho_c$ appears because $\Spin^c$-quantisation is used rather than Dolbeault-quantisation.

A common generalisation of $R^{-\infty}(K)$ and $K_*(C^*_rG)$ is the \emph{$K$-homology} group $K^*(C^*_rG)$ of $C^*_rG$. In view of the successes for quantisation with values in $R^{-\infty}(K)$ and $K_*(C^*_rG)$, it makes sense to find a definition of quantisation with values in $K^*(C^*_rG)$, without assuming the group or the orbit space to be compact. In this note, we generalise the formal quantisation studied by Weitsman and Paradan to noncompact groups. 
For manifolds of the form $M = G\times_K N$ as in \eqref{eq [Q,R]=0 cocpt}, but with $N$ possibly noncompact, 
we define geometric quantisation in $K^*(C^*_rG)$, and show that it equals formal quantisation. This means that this version of quantisation commutes with reduction (Theorem \ref{thm [Q,R]=0}). We study the functorial properties of formal quantisation, and give a relation with the main result in \cite{HM} via a version of the shifting trick.

\subsection*{Acknowledgements}

The authors would like to thank Paul--\'Emile Paradan for helpful comments.

\section{Compact groups}

Let $K$ be a compact, connected  Lie group, with Lie algebra $\kk$. Let $\Ldom^K$ be the set of dominant integral weights of $K$, with respect to a maximal torus and a choice of positive roots. For $\lambda \in \Ldom^K$, let $\pi^K_{\lambda}$ be the irreducible representation of $K$ with highest weight $\lambda$.
Consider the abelian group
\[
R^{-\infty}(K) := \Hom_{\Z}(R(K), \Z)
\]
Here $R(K)$ denotes the representation ring of $K$. Note that $R^{-\infty}(K)$ is generated by the elements $[\pi^K_{\lambda}]^*$, for $\lambda \in \Ldom^K$, where
\[
[\pi^K_{\lambda}]^*\bigl([\pi^K_{\lambda'}]\bigr) = \delta_{\lambda \lambda'} 
	:= \left\{ \begin{array}{ll} 	1 & \text{if $\lambda = \lambda'$;} \\
						0 & \text{if $\lambda \not= \lambda'$,}\end{array}\right.
\]
for all $\lambda' \in \Ldom^K$.

Let $(N, \nu)$ be a prequantised symplectic manifold, equipped with a Hamiltonian $K$-action. Suppose the momentum map $\Phi^K: N \to \kk^*$ is proper. Then for every $\lambda \in \Ldom^K$, the symplectic reduction \cite{MW}   $N_{\lambda}$ of the action at $\lambda/i$ is compact. Hence it has a quantisation $Q(N_{\lambda}) \in \Z$, where one can use Meinrenken and Sjamaar's approach \cite{MS} in the singular case.

Weitsman \cite{Weitsman} introduced the \emph{formal geometric quantisation} $Q^{-\infty}_K(N, \nu)$ of the action by $K$ on $(N, \nu)$, as
\[
Q^{-\infty}_K(N, \nu) = \sum_{\lambda \in \Ldom^K} Q(N_{\lambda}) [\pi^K_{\lambda}]^* \quad \in R^{-\infty}(K).
\]
Paradan \cite{Paradan1} proved that  formal quantisation is functorial with respect to restriction to subgroups, and also notes that it is functorial with respect to Cartesian products. 

To state Paradan's result on restriction to a subgroup $K'<K$, we consider the
abelian group
 $R^{-\infty}(K)_{K'}$ of formal differences of  equivalence classes of representations of $K$ whose restrictions to $K'$ decompose into irreducible representations of $K'$ with finite multiplicities. One has
\[
R(K) \subset R^{-\infty}(K)_{K'} \subset R^{-\infty}(K),
\]
and
\[
\begin{split}
R^{-\infty}(K)_{K}&= R^{-\infty}(K);\\
R^{-\infty}(K)_{\{e\}} &= R(K).
\end{split}
\]
Here we identify $R^{-\infty}(K) = \Hom_{\Z}(R(K), \Z)$ with the abelian group of formal differences of equivalence classes of representations of $K$ containing finitely many copies of all irreducibles, via the map $[\pi^K_{\lambda}]^* \mapsto [\pi^K_{\lambda}]$. By definition of $R^{-\infty}(K)_{K'}$, there is a well-defined restriction map
\[
\Res^K_{K'}: R^{-\infty}(K)_{K'} \to R^{-\infty}(K').
\]

Let  $\kk'$ be the Lie algebra of $K'$, and suppose that the momentum map $\Phi^{K'}: N \to (\kk')^*$ for the action by $K'$ on $N$ is still proper.  A criterion for this condition is given in Proposition 2.11 in \cite{ParadanHolDS}. Then Paradan showed that $Q^{-\infty}_K(N, \nu)  \in R^{-\infty}(K)_{K'}$, and 
\begin{equation} \label{eq restr cpt}
\Res^K_{K'} \bigl(Q^{-\infty}_K(N, \nu) \bigr) = Q^{-\infty}_{K'}(N, \nu).
\end{equation}

In addition, for $j = 1,2$, let $K_j$ be a compact, connected Lie group, and let 
 $(N_j, \nu_j)$ be a prequantised Hamiltonian $K_j$-manifold with proper momentum map. Then Paradan points out that
\begin{equation} \label{eq mult cpt}
Q^{-\infty}_{K_1}(N_1, \nu_1) \otimes Q^{-\infty}_{K_2}(N_1, \nu_1) = Q^{-\infty}_{K_1 \times K_2}(N_1 \times N_2, \nu_1 \times \nu_2).
\end{equation}
Note that $R^{-\infty}(K)$ is an $R(K)$-module, via the tensor product of representations. 
The properties \eqref{eq restr cpt} and \eqref{eq mult cpt} together imply that, if $N_1$ is compact,
\begin{equation} \label{eq quant ring str}
Q^{-\infty}_{K}(N_1, \nu_1) \cdot Q^{-\infty}_{K}(N_2, \nu_2) = Q^{-\infty}_{K}(N_1 \times N_2, \nu_1 \times \nu_2),  
\end{equation}
where the dot $\cdot$ denotes the $R(K)$-module structure of $R^{-\infty}(K)$. 

Our goal is to generalise the definition of formal geometric quantisation, and its functoriality properties with respect to restriction and Cartesian products, to noncompact groups. Then $R^{-\infty}(K)$ will be replaced by $K$-homology of group $C^*$-algebras.

\section{$K$-homology of group $C^*$-algebras and formal quantisation}

Let $G$ be a connected Lie group containing $K$ as a maximal compact subgroup. Let $\kg$ be the Lie algebra of $G$. Let $C^*_rG$ be the reduced $C^*$-algebra of $G$. We will write $d:= \dim(G/K)$. The \emph{Connes--Kasparov conjecture}, proved for almost connected groups by Chabert, Echterhoff and Nest \cite{CEN}, states that there is an isomorphism of Abelian groups
\[
\DInd_K^G: R(K) \xrightarrow{\cong} K_d(C^*_rG).
\]
This isomorphism is called \emph{Dirac induction}, and is given by
\begin{equation} \label{eq def DInd}
\DInd_K^G[\pi^K_{\lambda}] = \mu_{G/K}^G \bigl[D^{\lambda}_{G/K} \bigr],
\end{equation}
for $\lambda \in \Ldom^K$, where $\mu_{G/K}^G$ is the analytic assembly map \cite{BCH}, and $D^{\lambda}_{G/K}$ is a Dirac operator on $G/K$ coupled to the representation $\pi^K_{\lambda}$.

Let $K^d(C^*_rG)$ be the \emph{$K$-homology} group of $C^*_rG$ in degree $d$.
Since $K_d(C^*_rG) \cong R(K)$ is torsion-free, the universal coefficient theorem \cite{RS} implies that
\begin{equation} \label{eq UCT}
K^d(C^*_rG) \cong \Hom_{\Z}(K_d(C^*_rG), \Z).
\end{equation}
In particular, $R^{-\infty}(K) = K^0(C^*_rK)$.
The isomorphism \eqref{eq UCT} is given by the Kasparov product. Pulling back along the Dirac induction map defines an isomorphism of Abelian groups
\begin{equation} \label{eq pullback DInd}
\bigl(\DInd_K^G\bigr)^*: K^d(C^*_rG) \xrightarrow{\cong} R^{-\infty}(K).
\end{equation}

For $\lambda \in \Ldom^K$, we write $[\lambda]$ for the generator $\DInd_K^G[\pi^{K}_{\lambda}]$ of $K^d(C^*_rG)$. Let $[\lambda]^* \in K^d(C^*_rG)$ be the corresponding generator, defined by
\[
[\lambda]^*\bigl([\lambda']\bigr) = \delta_{\lambda \lambda'},
\]
for $\lambda' \in \Ldom^K$. Then
\begin{equation} \label{eq Dindstar lambda}
\bigl(\DInd_K^G\bigr)^* [\lambda]^* = [\pi^K_{\lambda}]^*.
\end{equation}

We consider $K_{d}(C^*G)$ as a subgroup of $K^d(C^*_rG)$ via the map $[\lambda] \mapsto [\lambda]^*$.


Using the generators $[\lambda]^*$ of $K^d(C^*_rG)$, one can generalise formal geometric quantisation to actions by noncompact groups as follows. Let $(M, \omega)$ be a prequantised symplectic manifold, equipped with a proper Hamiltonian $G$-action. Suppose the momentum map $\Phi^G: M \to \kg^*$ is \emph{$G$-proper}, in the sense that the inverse image of every cocompact set is cocompact. (By a cocompact set we mean  a set with compact quotient by the group action.) Then all symplectic reductions of the action are compact. Assume that all symplectic reductions at elements of $\Lambda^K_+$ have well-defined quantisations; see below for a discussion of this assumption.
\begin{definition}
The \emph{formal geometric quantisation} of the action by $G$ on $(M, \omega)$ is
\[
Q^{-\infty}_G(M, \omega) = \sum_{\lambda \in \Lambda^K_+} Q(M_{\lambda}) [\lambda]^* \quad \in K^d(C^*_rG).
\]
\end{definition}


Let $\xi \in \kg^*$ be a value of $\Phi^G$. Any of the following conditions is sufficient for the symplectic reduction $M_{\xi}$ to have a well-defined quantisation $Q(M_{\xi}) \in \Z$.
\begin{enumerate}
\item If $G$ is \emph{compact}, one can define $Q(M_{\xi})$ using Meinrenken and Sjamaar's methods \cite{MS}.
\item Suppose that
\begin{enumerate}
\item $\xi$ is a \emph{quasi-regular value} of $\Phi^G$, in the sense that all $G$ orbits in $(\Phi^G)^{-1}(G\cdot \xi)$ have the same dimension; and
\item the prequantum line bundle $L\to M$ is \emph{almost equivariantly locally trivial at level $\xi$}, in the sense that for all $m \in (\Phi^G)^{-1}(G\cdot \xi)$, the identity component of the stabiliser $G_m$ acts trivially on the fibre $L_m$.
\end{enumerate}
Then $M_{\xi}$ is a compact symplectic orbifold, with a prequantum line bundle induced by $L$, and can be quantised. See Section 2.2 in \cite{MS}.
\item As a special case of the second point, suppose that all points  in $(\Phi^G)^{-1}(G\cdot \xi)$ have conjugate stabilisers, and that $G\cdot \xi$ is locally closed. Then by Theorem 16 in \cite{BL}, the symplectic reduction $M_{\xi}$ is smooth. 
\item Suppose $G$ is semisimple with discrete series, and let  $K<G$ be a maximal compact subgroup.
If the stabiliser $G_{\xi}$ is \emph{compact}, i.e.\ $\xi$ is \emph{strongly elliptic}, set
\[
\widetilde{M} := (\Phi^{G})^{-1}(\kg^*_{\se}),
\]
where $\kg^*_{\se}$ is the set of strongly elliptic elements. By Corollary 2.4 and Proposition 2.6 in \cite{Weinstein}, this is a nonempty open subset of $\kg^*$. Hence $\widetilde{M}$ is a $G$-invariant open neighbourhood of $(\Phi^G)^{-1}(G\cdot \xi)$ in $M$. By Propositions 2.8 and 2.14 in \cite{HochsDS},  $\widetilde{M}$ has the form
\[
 \widetilde{M} \cong G\times_K N,
\]
where $N:= (\Phi^G)^{-1}(\kk^*) \cap \widetilde{M}$ is a Hamiltonian $K$-manifold, with momentum map defined by the restriction of $\Phi^G$. Hence there is a homeomorphism
\[
M_{\xi} =  \widetilde{M}_{\xi} = N_{\xi}, 
\]
which is a symplectomorphism of $\xi$ is a regular value of $\Phi^K$. Hence $M_{\xi} \cong N_{\xi}$ has a well-defined quantisation by the first point. 

The generators $[\lambda]$, for $\lambda \in \Lambda^K_+ \cap i\kg^*_{\se}$ correspond to discrete series representations, see Remark 2.5 in \cite{HochsDS}, and also Example \ref{eq ds DRes}.

\item As a special case of the fourth point, suppose that $\Phi^G$ is \emph{proper}, rather than just $G$-proper. Then it was pointed out in \cite{ParadanHolDS} that $\Phi^G(M) \subset \kg^*_{\se}$, so the fourth point applies to any value $\xi$ of $\Phi^G$.
\end{enumerate}

\section{Quantisation commutes with reduction}

\emph{Quantisation commutes with reduction} is the statement that
\begin{equation} \label{eq [Q,R]=0 G}
Q_G(M, \omega) = \Qform_G(M, \omega),
\end{equation}
for some definition of $Q_G(M, \omega) \in K^d(C^*_rG)$. For a prequantised Hamiltonian $K$-manifold  $(N, \nu)$ with proper momentum map $\Phi^K$, as considered earlier, such a definition was given by Ma and Zhang \cite{MaZ,MaZ2} and Paradan \cite{Paradan2}. They defined $Q_K(N, \nu)$ via expanding (relatively) compact subsets of $N$. Braverman's  index theory for generalised Dirac operators on possibly noncompact manifolds \cite{Braverman} can be applied to give a direct analytic definition of quantisation, provided the critical point set of the norm-squared function of $\Phi^K$ is compact (which it is if $N$ is real-algebraic and $\Phi^K$ is algebraic, as noted in Lemma 3.24 of \cite{ParadanHolDS}. 
Ma and Zhang, and also Paradan, proved that 
\begin{equation} \label{eq [Q,R]=0 K}
Q_K(N, \nu) = \Qform_K(N, \nu).
\end{equation}

A definition of $Q_G(M, \omega) \in K^d(C^*_rG)$ satisfying \eqref{eq [Q,R]=0 G} can be given if $M$ is of a particular form. Suppose that $G$ is semisimple, and let $\kg = \kk \oplus \kp$ be a Cartan decomposition. Suppose $M$ is of the form $M = G\times_K N$ considered in \cite{HochsDS,HochsPresympl}, the quotient of $G \times N$ by the $K$-action
\[
k\cdot (g, n) = (gk^{-1}, kn),
\]
for $k \in K$, $g \in G$ and $n \in N$.
As in \cite{HochsPresympl}, consider the $G$-invariant presymplectic form (i.e.\ closed two-form) $\omega$ on $M$ given by
\[
\omega_{[e, n]}\bigl(Tq(X+v), Tq(Y+w)\bigr) := \nu_n(v, w) - \langle\Phi^K(n), [X,Y]\rangle,
\]
where $n \in N$, $X, Y \in \kp$, $v, w \in T_nN$, and $q: G\times N \to M$ is the quotient map.  
 In general, $\omega$ may be degenerate, but all constructions relevant to quantisation and reduction still apply. 
The momentum map $\Phi^G: M \to \kg^*$, given by
\begin{equation} \label{eq def Phi G}
\Phi^G[g, n] = \Ad^*(g) \Phi^K(n),
\end{equation}
for $g \in G$ and $n \in N$, is $G$-proper if $\Phi^K$ is proper.

If $G$ has discrete series representations and $\Phi^K(N) \subset \kk^* \hookrightarrow \kg^*$ lies inside the set $\kg^*_{\se}$ of strongly elliptic elements, then $\omega$ is an actual symplectic form (see \cite{HochsDS}, Proposition 2.4, with more details given in Proposition 12.4 in \cite{HochsThesis}). Conversely, any Hamiltonian $G$-manifold $(M, \omega)$ whose momentum map takes values in the strongly elliptic set is of the form $M = G\times_K N$ as above (\cite{HochsDS}, Proposition 2.14).

In \cite{HochsDS,HochsPresympl}, it was shown that for compact $N$, \emph{quantisation commutes with induction}, in the sense that
\[
Q_G(M, \omega) = \DInd_K^G\bigl(Q_K(N, \nu)\bigr).
\]
Here $Q_G(M, \omega) \in K^*_d(C^*_rG)$ is Landsman's version \eqref{eq def quant Landsman} of geometric quantisation.
This shows that the following definition reduces to Landsman's definition if $N$ is compact.
\begin{definition} \label{def quant ind}
For $M = G\times_K N$ as above, the \emph{geometric quantisation} of the action by $G$ on $M$ is
\[
Q_G(M, \omega)  = \left(\DIndstar\right)^{-1} \bigl(Q_K(N, \nu) \bigr) \quad \in K^d(C^*_rG).
\]
\end{definition}
The formal quantisation of the action by $G$ on $(M, \omega)$ depends on the precise procedure used to quantise the reduced spaces $M_{\xi}$. Let $Q(M_{\xi})$ be defined as the usual index of a Dirac operator if $\xi$ is a quasi-regular value of $\Phi^G$, and as $Q(N_{\xi})$ if $\xi$ is a singular value (since $M_{\xi} \cong N_{\xi}$).
\begin{theorem}[Quantisation commutes with reduction] \label{thm [Q,R]=0}
If $M$ is of the form $M = G \times_K N$, then \eqref{eq [Q,R]=0 G} holds.
\end{theorem}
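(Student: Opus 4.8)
The plan is to transport the statement to the compact subgroup $K$ via the isomorphism $\DIndstar\colon K^d(C^*_rG) \xrightarrow{\cong} R^{-\infty}(K)$ of \eqref{eq pullback DInd}, use the already-established equality \eqref{eq [Q,R]=0 K} of $Q_K$ and $\Qform_K$, and then reduce everything to a direct comparison of the symplectic reductions $M_{\lambda}$ and $N_{\lambda}$.

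Concretely, I would apply $\DIndstar$ to both sides of \eqref{eq [Q,R]=0 G}. By Definition \ref{def quant ind}, the left-hand side becomes $\DIndstar\bigl(Q_G(M, \omega)\bigr) = Q_K(N, \nu)$, which by \eqref{eq [Q,R]=0 K} equals $\Qform_K(N, \nu) = \sum_{\lambda \in \Ldom^K} Q(N_{\lambda})\,[\pi^K_{\lambda}]^*$. By \eqref{eq Dindstar lambda}, the right-hand side becomes $\DIndstar\bigl(\Qform_G(M, \omega)\bigr) = \sum_{\lambda \in \Ldom^K} Q(M_{\lambda})\,[\pi^K_{\lambda}]^*$. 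Since $\DIndstar$ is an isomorphism and two elements of $R^{-\infty}(K) = \Hom_{\Z}(R(K), \Z)$ coincide as soon as they agree on each generator $[\pi^K_{\lambda'}]$ of $R(K)$, the theorem is equivalent to the equality $Q(M_{\lambda}) = Q(N_{\lambda})$ for every $\lambda \in \Ldom^K$.

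To prove this last equality I would compare the reduced spaces directly; put $\xi = \lambda/i$. Since $\Phi^K$ takes values in $\kk^* \hookrightarrow \kg^*$, formula \eqref{eq def Phi G} gives $(\Phi^G)^{-1}(G\cdot \xi) = \{[g, n] : \Phi^K(n) \in G\cdot \xi \cap \kk^*\}$, and $\Phi^G(M) \subset \Ad^*(G)\kk^*$ consists of elliptic elements, so $M_{\xi}$ and $N_{\xi}$ are both empty unless $\xi$ is elliptic. The geometric input I would invoke is that a coadjoint orbit of $G$ which meets $\kk^*$ does so in a single coadjoint orbit of $K$, that is, $G\cdot \xi \cap \kk^* = K\cdot \xi$; this is the fact underlying the structure theory of \cite{HochsDS} (Propositions 2.8 and 2.14). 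It follows that $(\Phi^G)^{-1}(G\cdot \xi) = G\times_K (\Phi^K)^{-1}(K\cdot \xi)$, so that quotienting by $G$ yields a homeomorphism $M_{\xi} \cong N_{\xi}$. One then checks, as in \cite{HochsDS, HochsPresympl}, that this identification respects every ingredient entering the definition of quantisation: $\omega$ restricts to $\nu$ on the copy $\{[e, n] : n \in N\}$ of $N$ in $M$, and the prequantum line bundle on $M = G\times_K N$ is induced from the one on $N$, so the reduced presymplectic forms and prequantum data on $M_{\xi}$ and $N_{\xi}$ agree; and $\xi$ is a quasi-regular value of $\Phi^G$ if and only if it is a quasi-regular value of $\Phi^K$, since the $G$-orbit through $[e, n]$ is $G/K_n$ while the $K$-orbit through $n$ is $K/K_n$. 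Hence $Q(M_{\xi}) = Q(N_{\xi})$, whether these numbers are defined as Dirac indices (quasi-regular case) or via Meinrenken and Sjamaar's construction \cite{MS} (singular case).

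The main obstacle is the third paragraph: making the identification $M_{\lambda} \cong N_{\lambda}$ fully precise at the level of the presymplectic form, the orbifold structure and the prequantum line bundle, and reconciling the quasi-regular and singular cases of the definition of $Q(M_{\xi})$. Granting the orbit-intersection fact $G\cdot \xi \cap \kk^* = K\cdot \xi$ and the structural results of \cite{HochsDS, HochsPresympl} already used in this paper, this is a matter of carefully unwinding definitions rather than a genuinely new difficulty.
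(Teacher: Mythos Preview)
Your proposal is correct and follows essentially the same route as the paper: transport via $\DIndstar$, invoke \eqref{eq [Q,R]=0 K}, and reduce to $Q(M_{\lambda}) = Q(N_{\lambda})$ via the identification $(M_{\lambda}, \omega_{\lambda}) \cong (N_{\lambda}, \nu_{\lambda})$; the paper simply packages the last step as Lemma~\ref{lem quant ind} and cites \cite{HochsPresympl}, Lemma~5.1, rather than re-deriving the orbit-intersection fact. One simplification you missed: in the paper's conventions (stated just before the theorem) the singular case $Q(M_{\lambda}) = Q(N_{\lambda})$ holds \emph{by definition}, so no appeal to Meinrenken--Sjamaar is needed there.
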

\begin{proof}
The equality \eqref{eq [Q,R]=0 K} implies that
\[
\begin{split}
Q_G(M, \omega) &=  \left(\DIndstar\right)^{-1} \bigl( Q_K(N, \nu) \bigr) \\
	&= \left(\DIndstar\right)^{-1}  \bigl( \Qform_K(N, \nu) \bigr).
\end{split}
\]
Because of Lemma \ref{lem quant ind} below, the latter expression equals $\Qform_G(M, \omega)$.
\end{proof}

\begin{lemma} \label{lem quant ind}
In this setting, one has
\[
\DIndstar \bigl(\Qform_G(M, \omega) \bigr) = \Qform_K(N, \nu).
\]
\end{lemma}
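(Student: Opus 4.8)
The statement to prove is $\DIndstar \bigl(\Qform_G(M, \omega) \bigr) = \Qform_K(N, \nu)$, where $\Qform_G(M,\omega) = \sum_\lambda Q(M_\lambda)[\lambda]^*$ and $\Qform_K(N,\nu) = \sum_\lambda Q(N_\lambda)[\pi^K_\lambda]^*$. By the identity $\DIndstar[\lambda]^* = [\pi^K_\lambda]^*$ recorded in equation (2.6) [the displayed \eqref{eq Dindstar lambda}], the map $\DIndstar$ sends the $\lambda$-component of the left side to $Q(M_\lambda)[\pi^K_\lambda]^*$. So the lemma reduces, coefficient by coefficient, to the claim that $Q(M_\lambda) = Q(N_\lambda)$ for every $\lambda \in \Lambda^K_+$, together with a remark that the (possibly infinite) sum is handled correctly by $\DIndstar$.

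I would organize the proof as follows. First, record that $\DIndstar$ is an isomorphism (equation \eqref{eq pullback DInd}) and in particular is additive and continuous in the relevant sense — it sends the generator $[\lambda]^*$ to $[\pi^K_\lambda]^*$ — so applying it termwise to $\sum_\lambda Q(M_\lambda)[\lambda]^*$ yields $\sum_\lambda Q(M_\lambda)[\pi^K_\lambda]^*$. Second, and this is the heart of the matter, identify the reduced spaces: for $M = G \times_K N$ with the presymplectic form $\omega$ described above and momentum map $\Phi^G[g,n] = \Ad^*(g)\Phi^K(n)$, one has for each $\lambda \in \Lambda^K_+$ (so $\lambda/i \in \kk^* \subset \kg^*$) that
\[
M_\lambda = (\Phi^G)^{-1}(G\cdot(\lambda/i))/G \cong (\Phi^K)^{-1}(\lambda/i)/K_{\lambda/i} = N_\lambda,
\]
since any point of $(\Phi^G)^{-1}(G\cdot(\lambda/i))$ is $G$-conjugate into $(\Phi^K)^{-1}(\lambda/i) \subset N = (\Phi^G)^{-1}(\kk^*) \cap M$, using that $\Ad^*$-orbits of elements of $\kk^*$ meet $\kk^*$ exactly in $K$-orbits. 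This is the same homeomorphism $M_\xi \cong N_\xi$ already invoked in the fourth bullet of the list of sufficient conditions for quantisability, so the identification is not new; I would just need to check it respects the prequantum data (and the symplectic form, in the quasi-regular case) so that the \emph{quantisations} agree, $Q(M_\lambda) = Q(N_\lambda)$ — and here I can cite the convention adopted just before the theorem, that $Q(M_\xi)$ is \emph{defined} to be $Q(N_\xi)$ at singular values and the Dirac index at quasi-regular values, where the symplectomorphism $M_\xi \cong N_\xi$ transports the operator. Third, conclude: $\DIndstar(\Qform_G(M,\omega)) = \sum_\lambda Q(M_\lambda)[\pi^K_\lambda]^* = \sum_\lambda Q(N_\lambda)[\pi^K_\lambda]^* = \Qform_K(N,\nu)$.

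The main obstacle is the second step — making the identification $M_\lambda \cong N_\lambda$ precise at the level of the data that feed into quantisation, rather than merely as topological spaces. Concretely I need: (a) that the slice $(\Phi^G)^{-1}(\lambda/i) = (\Phi^K)^{-1}(\lambda/i)$ inside $N$, so the reduction can be computed inside $N$ with the residual group $K_{\lambda/i}$ acting; (b) that the induced (pre)symplectic form on the quotient computed from $\omega$ on $M$ agrees with the one computed from $\nu$ on $N$ — this follows from the formula for $\omega$, since on the $N$-slice the term $\langle \Phi^K(n), [X,Y]\rangle$ restricts to the momentum value $\langle \lambda/i, [X,Y]\rangle$ and the $\kp$-directions drop out of the reduction; and (c) that the prequantum line bundles match. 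All three are essentially bookkeeping with the explicit description of $M = G\times_K N$, and each has a precedent in \cite{HochsDS,HochsPresympl}, so I expect the proof to be short, consisting mainly of citing \eqref{eq Dindstar lambda}, the reduction identification, and the convention on $Q(M_\xi)$.
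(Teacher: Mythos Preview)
Your proposal is correct and follows essentially the same line as the paper's proof: both reduce via \eqref{eq Dindstar lambda} to the coefficient identity $Q(M_\lambda)=Q(N_\lambda)$, handle singular values by the convention that $Q(M_\lambda):=Q(N_\lambda)$, and handle quasi-regular values by the (pre)symplectic orbifold isomorphism $(M_\lambda,\omega_\lambda)\cong(N_\lambda,\nu_\lambda)$, which the paper simply cites from \cite{HochsPresympl}, Lemma~5.1, rather than re-deriving as you sketch in (a)--(c).
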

\begin{proof}
Let $\lambda \in \Ldom^K$. If $\lambda/i$ is a singular value of $\Phi^G$, one by definition has 
\begin{equation}  \label{eq reds equal}
Q(M_{\lambda}) = Q(N_{\lambda}).
\end{equation}
For quasi-regular values, one has an isomorphism of (pre)symplectic orbifolds
\[
(M_{\lambda}, \omega_{\lambda}) \cong (N_{\lambda}, \nu_{\lambda})
\]
(see \cite{HochsPresympl}, Lemma 5.1), so that, in particular, $(M_{\lambda}, \omega_{\lambda})$ is actually symplectic, rather than just presymplectic. Hence \eqref{eq reds equal} also holds in that case.
The desired equality therefore follows from \eqref{eq Dindstar lambda}.
\end{proof}

Theorem \ref{thm [Q,R]=0} motivates the search for a definition of $Q_G(M, \omega) \in K^d(C^*_rG)$ for arbitrary $(M, \omega)$, generalising Definition \ref{def quant ind}. 

\section{A restriction map}

We return to the case where $G$ is any connected Lie group. Let $G' < G$ be a closed, connected subgroup that has a maximal compact subgroup $K'$ contained in $K$. We write $d' := \dim(G'/K') $. Set
\[
K^*(C^*_rG)_{G'} := \Bigl( \bigl(\DInd^G_K \bigr)^*\Bigr)^{-1}\bigl(R^{-\infty}(K)_{K'} \bigr).
\]
(Note that since all maximal compact subgroups are conjugate, the ring $R^{-\infty}(K)_{K'}$ is independent of the choice of maximal compact subgroup $K'<G'$.)
\begin{definition}
The \emph{Dirac restriction} map $\DRes^G_{G'}$ is defined by commutativity of the following diagram:
\[
\xymatrix{
K^d(C^*_rG)_{G'} \ar[r]^-{\DRes^G_{G'}} \ar[d]_{(\DInd^{G}_{K})^*}^{\cong} & K^{d'}(C^*_rG') \ar[d]^{(\DInd^{G'}_{K'})^*}_{\cong} \\
R^{-\infty}(K)_{K'} \ar[r]^-{\Res^K_{K'}} & R^{-\infty}(K').
}
\]
\end{definition}
Because the Dirac restriction map is modelled on the restriction map from $K$ to $K'$, it may not contain all representation theoretic information concerning restriction of representations from $G$ to $G'$. It does have natural functoriality properties with respect to formal quantisation, as we will see.

\begin{example} \label{eq ds DRes}
Suppose $G$ is semisimple with discrete series. Then $d$ is even. Let $\rho_c$ be half the sum of the positive roots of $K$. 
Let $\lambda \in \Ldom^K$, and suppose $\lambda$ is strongly elliptic. Let $\pi^G_{\lambda}$ be the irreducible discrete series representation of $G$ with Harish--Chandra parameter $\lambda + \rho_c$.  Then $\pi^G_{\lambda}$ defines a $K$-theory class
\[
[\pi^G_{\lambda}] \in K_0(C^*_rG)
\]
(see \cite{Lafforgue}, Section 2.2). In (5.3) in \cite{HochsPresympl}, it is noted that
\[
[\pi^G_{\lambda}] = (-1)^{d/2}[\lambda] = (-1)^{d/2}\DInd_K^G[\pi^K_{\lambda}].
\]
Hence the image of $[\pi^G_{\lambda}]$ in $K^d(C^*_rG)$ is
 $[\pi^G_{\lambda}]^* := (-1)^{d/2}[\lambda]^* \in K^0(C^*_rG)$, and
\[
\DIndstar \bigl([\pi^G_{\lambda}]^* \bigr)= (-1)^{d/2}[\pi^K_{\lambda}]^*.
\]

Let $\Ldom^{K'}$ be the set of dominant integral weights of $K'$ with respect to a maximal torus and positive roots, compatible with the choices made for $K$. Write
\[
\Res^K_{K'} (\pi^K_{\lambda}) = \sum_{\lambda' \in \Ldom^{K'}} m_{\lambda'} \pi^{K'}_{\lambda'},
\]
for certain integer coefficients $m_{\lambda'}$. Then
\[
\begin{split}
\bigl(\DInd^{G'}_{K'}\bigr)^* \circ \DRes^{G}_{G'}[\pi^G_{\lambda}]^* &= \Res^K_{K'} \circ \bigl(\DInd^{G}_{K}\bigr)^* [\pi^G_{\lambda}]^* \\
	&= (-1)^{d/2} \sum_{\lambda' \in \Ldom^{K'}} m_{\lambda'} [\pi^{K'}_{\lambda'}]^*.
\end{split}
\]
Hence
\[
\DRes^{G}_{G'}[\pi^G_{\lambda}]^* = (-1)^{d/2} \sum_{\lambda' \in \Ldom^{K'}} m_{\lambda'} [\lambda']^*,
\]
by \eqref{eq Dindstar lambda}.
\end{example}


Suppose that $G$ and $G'$ are semisimple, and that $M$ is of the form $M = G \times_K N$ as above. Then formal geometric quantisation has the following functoriality property with respect to Dirac restriction.
\begin{proposition} \label{prop restr}
Suppose that the momentum map for the action by $G'$ on $M$ is still proper. Then (omitting the various symplectic forms from the notation)
we have
\[
\Qform_{G}(G\times_K N) \in K^d(C^*_rG)_{G'},
\]
 and 
\[
\DRes^G_{G'} \bigl( \Qform_G (G \times_K N) \bigr)= \Qform_{G'}(G' \times_{K'} N).
\]
\end{proposition}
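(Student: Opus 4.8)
The plan is to transport the two known compact-group statements --- Paradan's restriction formula \eqref{eq restr cpt} and Lemma \ref{lem quant ind} --- along the Dirac induction isomorphisms, so that essentially no new analysis is required.

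First I would check that the hypotheses carry over from $G$ to $G'$. The inclusion $\iota\colon N \hookrightarrow M$, $n \mapsto [e,n]$, is a closed embedding: under the quotient map $q\colon G \times N \to M$ the preimage of $\iota(N)$ equals $K \times N$, which is closed, and $[e,n]\mapsto n$ furnishes a continuous inverse on $\iota(N)$. Choosing the Cartan decomposition $\kg' = \kk' \oplus \kp'$ compatibly with that of $\kg$, so that $\kp' \subset \kp$, one has $\Phi^{K'} = \Phi^{G'} \circ \iota$: indeed, for $n \in N$ the functional $\Phi^K(n) \in \kk^* \subset \kg^*$ annihilates $\kp$, hence also $\kp' \subset \kp$, and restricts to $\Phi^{K'}(n)$ on $\kk'$. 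Hence $\Phi^{K'}\colon N \to (\kk')^*$ is proper, being the restriction of the proper map $\Phi^{G'}$ to a closed subset. Consequently the reduced spaces $N_{\lambda'}$ for $\lambda' \in \Ldom^{K'}$ are compact and quantisable via \cite{MS}, so $\Qform_{K'}(N,\nu)$ is defined; and $\Qform_{G'}(G'\times_{K'}N)$ is defined as well, since its momentum map $[g',n]\mapsto \Ad^*(g')\Phi^{K'}(n)$ is $G'$-proper whenever $\Phi^{K'}$ is proper.

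Next I would run the diagram chase. By Lemma \ref{lem quant ind}, $\DIndstar\bigl(\Qform_G(G\times_K N)\bigr) = \Qform_K(N,\nu)$, and by \eqref{eq restr cpt} (applicable since $\Phi^{K'}$ is proper) the right-hand side lies in $R^{-\infty}(K)_{K'}$, with $\Res^K_{K'}\bigl(\Qform_K(N,\nu)\bigr) = \Qform_{K'}(N,\nu)$. As $K^d(C^*_rG)_{G'}$ is by definition the preimage of $R^{-\infty}(K)_{K'}$ under $\DIndstar$, this already gives $\Qform_G(G\times_K N) \in K^d(C^*_rG)_{G'}$, which is the first assertion. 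For the second, apply the isomorphism $(\DInd^{G'}_{K'})^*$ to $\DRes^G_{G'}\bigl(\Qform_G(G\times_K N)\bigr)$; by the commuting square defining $\DRes^G_{G'}$ this equals $\Res^K_{K'}\bigl(\DIndstar(\Qform_G(G\times_K N))\bigr) = \Res^K_{K'}\bigl(\Qform_K(N,\nu)\bigr) = \Qform_{K'}(N,\nu)$. On the other hand, Lemma \ref{lem quant ind} applied to $G'$, $K'$ and $M' = G'\times_{K'}N$ --- legitimate by the previous paragraph --- gives $(\DInd^{G'}_{K'})^*\bigl(\Qform_{G'}(G'\times_{K'}N)\bigr) = \Qform_{K'}(N,\nu)$. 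Since $(\DInd^{G'}_{K'})^*$ is injective, comparing the two expressions yields $\DRes^G_{G'}\bigl(\Qform_G(G\times_K N)\bigr) = \Qform_{G'}(G'\times_{K'}N)$.

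The main obstacle is not the diagram chasing but the first step: making precise that $\Phi^{K'}$ is the restriction of $\Phi^{G'}$ to the slice $N$ --- hence proper --- and confirming that Lemma \ref{lem quant ind} applies to the triple $(G', K', M')$ verbatim. In particular one must check that the presymplectic form on $M' = G'\times_{K'}N$ is the one built from $\nu$ and $\Phi^{K'}$ exactly as in the construction preceding Theorem \ref{thm [Q,R]=0}, and that the identification of reduced spaces $(M')_{\lambda'} \cong N_{\lambda'}$ (from \cite{HochsPresympl}, Lemma 5.1) holds at singular $\lambda'$ as well as at quasi-regular ones.
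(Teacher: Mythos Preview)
Your proposal is correct and follows essentially the same route as the paper: reduce to Paradan's compact-group restriction formula via Lemma \ref{lem quant ind} on both the $G$-side and the $G'$-side, then chase the defining square of $\DRes^G_{G'}$. The only difference is cosmetic: the paper handles the properness of $\Phi^{K'}$ in one sentence (observing from \eqref{eq def Phi G} that $G'$-properness of $\Phi^{G'}$ is equivalent to properness of $\Phi^{K'}$), whereas you spell out the closed embedding $N\hookrightarrow M$ and the compatibility $\Phi^{K'}=\Phi^{G'}\circ\iota$ explicitly --- which is a welcome clarification, not a different argument.
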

\begin{proof}
Because of the form \eqref{eq def Phi G} of the momentum map for the action by $G'$ on $M$, this map is $G'$-proper if and only if the momentum map for the action by $K'$ on $N$ is proper. Hence, by Paradan's result \eqref{eq restr cpt}, one has $\Qform_K(N) \in R^{-\infty}(K)_{K'}$, and
\[
\Res^K_{K'} \bigl(\Qform_K(N) \bigr) = \Qform_{K'}(N). 
\]
Lemma \ref{lem  quant ind} states that
\[
\Qform_K(N)  = \bigl(\DInd_{K}^{G} \bigr)^*  \bigl( \Qform_{G}(G\times_K N) \bigr). 
\]
Hence $\Qform_{G}(G\times_K N) \in K^d(C^*_rG)_{G'}$, and 
\[
\begin{split}
\bigl(\DInd_{K'}^{G'} \bigr)^* \circ \DRes^G_{G'} \bigl( \Qform_{G}(G\times_K N) \bigr)&= \Res^K_{K'} \circ \bigl(\DInd_{K}^{G} \bigr)^*  \bigl( \Qform_{G}(G\times_K N) \bigr)\\
	&=  \Res^K_{K'} \bigl(\Qform_K(N) \bigr)\\
	&= \Qform_{K'}(N) \\
	&= \bigl(\DInd_{K'}^{G'} \bigr)^* \bigl( \Qform_{G'}(G' \times_{K'}N) \bigr).
\end{split}
\]
\end{proof}

\begin{remark}
One would expect a restriction map $\Res^G_{G'}$ to satisfy
\[
\Res^G_{G'} \bigl( \Qform_G(M)\bigr) = \Qform_{G'}(M),
\]
compare with Theorem D in \cite{ParadanHolDS}. Proposition \ref{prop restr} reflects a different nature of the Dirac restriction map.
\end{remark}

\section{Products of generators}

In Section 5.3 of \cite{HochsDS}, a multiplicativity property of the analytic assembly map is discussed. This will allow us to generalise the multiplicative property \eqref{eq mult cpt} of formal geometric quantisation to noncompact groups. That in turn leads to a generalisation of property \eqref{eq quant ring str} of formal quantisation with respect to the $R(K)$-module structure of $R^{-\infty}(K)$.

Let $G_1$ and $G_2$ be locally compact groups, acting properly and cocompactly on locally compact Hausdorff spaces $X_1$ and $X_2$, respectively. There are Kasparov product maps on equivariant $K$-homology and on $K$-theory,
\[
\begin{split}
K^{G_1}_*(X_1) \times K^{G_2}_*(X_2) &\xrightarrow{\times} K^{G_1 \times G_2}_*(X_1 \times X_2); \\
K_*(C^*_rG_1) \times K_*(C^*_rG_2) &\xrightarrow{\times} K_*(C^*_r(G_1 \times G_2)).
\end{split}
\]
By Theorem 5.2 in \cite{HochsDS}, the assembly maps $\mu_{X_j}^{G_j}$ and $\mu_{X_1 \times X_2}^{G_1 \times G_2}$ satisfy
\begin{equation} \label{eq mult ass}
\mu_{X_1}^{G_1}(a_1) \times \mu_{X_2}^{G_2}(a_2) = \mu_{X_1 \times X_2}^{G_1 \times G_2}(a_1 \times a_2),
\end{equation}
for all $a_j \in K^{G_j}_*(X_j)$, at least if $X_1$ and $X_2$ are metrisable.

Now suppose $G_1$ and $G_2$ are connected, semisimple Lie groups. Let $K_j < G_j$ be maximal compact subgroups, 
 and suppose that the adjoint representations $\Ad: K_j \to \SO(\kp_j)$ lift to $\Spin(\kp_j)$, for Cartan decompositions $\kg_j = \kk_j \oplus \kp_j$.
  (This is always true for certain covers of the groups $G_j$.) Write $d_j := \dim(G_j/K_j)$.
\begin{lemma} \label{lem mult Kthry}
Let $\lambda_j \in \Ldom^{K_j}$. Then one has
\[
[\lambda_1] \times [\lambda_2] = [(\lambda_1, \lambda_2)] \quad \in K_{d_1 + d_2}(C^*_r(G_1 \times G_2)).
\]
(Note that $\Ldom^{K_1\times K_2} = \Ldom^{K_1} \times \Ldom^{K_2}$.)
\end{lemma}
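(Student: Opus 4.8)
The plan is to reduce the identity to the multiplicativity \eqref{eq mult ass} of the analytic assembly map, together with the compatibility between Dirac operators on a product manifold and the external Kasparov product in equivariant $K$-homology.

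Recall from \eqref{eq def DInd} that $[\lambda_j] = \mu^{G_j}_{G_j/K_j}\bigl[D^{\lambda_j}_{G_j/K_j}\bigr]$, where $D^{\lambda_j}_{G_j/K_j}$ is the $\Spin$-Dirac operator on $G_j/K_j$ (for the $\Spin$ structure coming from the assumed lift $K_j \to \Spin(\kp_j)$) twisted by the homogeneous bundle $E_{\lambda_j} := G_j \times_{K_j} V^{K_j}_{\lambda_j}$, with $V^{K_j}_{\lambda_j}$ the irreducible $K_j$-representation of highest weight $\lambda_j$. Each $G_j$ acts properly (and transitively, hence cocompactly) on the metrisable space $G_j/K_j$, so \eqref{eq mult ass} applies with $X_j = G_j/K_j$ and yields
\[
[\lambda_1] \times [\lambda_2] = \mu^{G_1 \times G_2}_{X}\Bigl(\bigl[D^{\lambda_1}_{G_1/K_1}\bigr] \times \bigl[D^{\lambda_2}_{G_2/K_2}\bigr]\Bigr), \qquad X := G_1/K_1 \times G_2/K_2 .
\]
On the other hand, under the identification $(G_1 \times G_2)/(K_1 \times K_2) \cong X$, the definition \eqref{eq def DInd} of Dirac induction for $G_1 \times G_2$ gives $[(\lambda_1, \lambda_2)] = \mu^{G_1 \times G_2}_{X}\bigl[D^{(\lambda_1, \lambda_2)}_{X}\bigr]$, where the twisting bundle is $(G_1\times G_2) \times_{K_1\times K_2} V^{K_1 \times K_2}_{(\lambda_1,\lambda_2)}$. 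So it suffices to prove
\[
\bigl[D^{\lambda_1}_{G_1/K_1}\bigr] \times \bigl[D^{\lambda_2}_{G_2/K_2}\bigr] = \bigl[D^{(\lambda_1,\lambda_2)}_{X}\bigr] \quad \in K^{G_1 \times G_2}_{d_1 + d_2}(X).
\]

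To prove this, I would unwind the external product on equivariant $K$-homology at the level of unbounded Kasparov modules. Three ingredients combine: (i) $V^{K_1 \times K_2}_{(\lambda_1,\lambda_2)} = V^{K_1}_{\lambda_1} \otimes V^{K_2}_{\lambda_2}$ as $K_1\times K_2$-representations, so $E_{(\lambda_1,\lambda_2)} = E_{\lambda_1} \boxtimes E_{\lambda_2}$; (ii) the $\Spin$ structure on $X$ built from the lifts $K_j \to \Spin(\kp_j)$ is the product $\Spin$ structure, whose spinor module is the graded external tensor product $S_1 \,\widehat{\otimes}\, S_2$ of the spinor modules on the factors --- this is the decomposition of the spinor representation of $\Spin(\kp_1 \oplus \kp_2)$ under the subgroup covering $\Spin(\kp_1) \times \Spin(\kp_2)$; (iii) consequently the twisted Dirac operator $D^{(\lambda_1,\lambda_2)}_{X}$ is unitarily equivalent to the external sum $D^{\lambda_1}_{G_1/K_1} \,\widehat{\otimes}\, 1 + 1 \,\widehat{\otimes}\, D^{\lambda_2}_{G_2/K_2}$ on the graded tensor product of the $L^2$-spinor modules. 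By the standard fact that such an external sum of Dirac-type operators represents the Kasparov product of the two factor classes (with degrees adding, $d_1 + d_2 = \dim X$), the displayed identity follows.

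The main obstacle is making steps (ii)--(iii) and the identification with the Kasparov product precise in the equivariant, $\Z/2$-graded, unbounded setting while keeping track of the $\Spin$-lifts; none of it is deep, but it is the point where care is required. An alternative, more self-contained route avoids \eqref{eq mult ass} altogether and instead invokes directly the compatibility of the Dirac induction map $\DInd^{G_1 \times G_2}_{K_1 \times K_2}$ with external tensor products of representations, which is in the spirit of the multiplicativity result in Section 5.3 of \cite{HochsDS}.
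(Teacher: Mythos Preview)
Your proposal is correct and follows essentially the same route as the paper: reduce to the equality $[D^{\lambda_1}_{G_1/K_1}]\times[D^{\lambda_2}_{G_2/K_2}]=[D^{(\lambda_1,\lambda_2)}_{X}]$ in equivariant $K$-homology using the tensor decomposition $\pi^{K_1\times K_2}_{(\lambda_1,\lambda_2)}=\pi^{K_1}_{\lambda_1}\otimes\pi^{K_2}_{\lambda_2}$ and the product form of the Dirac operator, then apply the multiplicativity \eqref{eq mult ass} of the assembly map. The paper presents these two steps in the opposite order and justifies the $K$-homology identity by writing the Dirac operator explicitly as in \eqref{eq def D lambda}, which is exactly your points (i)--(iii) condensed.
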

\begin{proof}
Let  $K_j < G_j$ be as above. In this setting, for $G = G_j$ and $K=K_j$, the Dirac operator $D^{\lambda}_{G/K}$ used in the definition \eqref{eq def DInd} of Dirac induction is defined explicitly as follows.
Let $\{X_1, \ldots, X_n\}$ be a basis of $\kp$, orthonormal with respect to the Killing form. Let $\Delta_{\kp}$ be the standard representation of $\Spin(\kp)$, and let $c: \kp \to \End(\Delta_{\kp})$ be the Clifford action. Let $\lambda \in \Ldom^K$, and let $V_{\lambda}$ be the representation space of $\pi^K_{\lambda}$. 
Then
\begin{equation} \label{eq def D lambda}
D^{\lambda}_{G/K} = \sum_{j=1}^n X_j \otimes c(X_j) \otimes 1_{V_{\lambda}}
\end{equation}
on
\[
\bigl(C^{\infty}(G) \otimes \Delta_{\kp} \otimes V_{\lambda}\bigr)^K.
\]

 In $K^{G_1 \times G_1}_*\bigl((G_1 \times G_2)/(K_1 \times K_2)\bigr)$, it follows from \eqref{eq def D lambda}  that
 for all $\lambda_j \in \Ldom^{K_j}$,
\[
\begin{split}
\bigl[ D^{(\lambda_1, \lambda_2)}_{(G_1 \times G_2)/(K_1 \times K_2)}  \bigr] 
	&= \bigl[ D^{\lambda_1}_{G_1/K_1} \otimes 1 + 1 \otimes D^{\lambda_2}_{G_2/K_2}   \bigr] \\
	&=    \bigl[ D^{\lambda_1}_{G_1/K_1} \bigr] \times \bigl[ D^{\lambda_2}_{G_2/K_2} \bigr]
\end{split}
\]
Here we have used the fact that $\pi^{K_1 \times K_2}_{(\lambda_1, \lambda_2)} = \pi^{K_1}_{\lambda_1} \otimes \pi^{K_2}_{\lambda_2}$. We conclude that, because of \eqref{eq mult ass},
\[
\begin{split}
 [(\lambda_1, \lambda_2)] &= \DInd^{G_1 \times G_2}_{K_1 \times K_2}[\pi^{K_1 \times K_2}_{(\lambda_1, \lambda_2)}] \\
 	&= \mu_{(G_1 \times G_2)/(K_1 \times K_2)}^{G_1 \times G_2} \bigl[ D^{(\lambda_1, \lambda_2)}_{(G_1 \times G_2)/(K_1 \times K_2)}  \bigr] \\
	&= \mu_{(G_1 \times G_2)/(K_1 \times K_2)}^{G_1 \times G_2} \left(  \bigl[ D^{\lambda_1}_{G_1/K_1} \bigr] \times \bigl[ D^{\lambda_2}_{G_2/K_2} \bigr]  \right) \\
	&= \mu_{G_1/K_1}^{G_1} \bigl[ D^{\lambda_1}_{G_1/K_1} \bigr] \times  \mu_{G_2/K_2}^{G_2} \bigl[ D^{\lambda_2}_{G_2/K_2} \bigr] \\
	&= [\lambda_1]  \times [\lambda_2].
\end{split}
\]
\end{proof}

We will use an extension of Lemma \ref{lem mult Kthry} to an equality involving the Kasparov product map on $K$-homology
\begin{equation} \label{eq Kasp Khom}
K^*(C^*_rG_1) \times K^*(C^*_rG_2) \xrightarrow{\times} K^*(C^*_r(G_1 \times G_2)).
\end{equation}
\begin{corollary} \label{cor mult Khom} 
For all $\lambda_j \in \Ldom^{K_j}$, one has
\[
[\lambda_1]^* \times [\lambda_2]^* = [(\lambda_1, \lambda_2)]^* \quad \in K^{d_1 + d_2}(C^*_r(G_1 \times G_2)).
\]
\end{corollary}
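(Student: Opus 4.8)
The plan is to deduce Corollary~\ref{cor mult Khom} from Lemma~\ref{lem mult Kthry} by dualising, using the fact that all the relevant $K$-theory groups are free abelian and that the Kasparov product on $K$-homology is dual to the external product on $K$-theory. First I would recall that, via the universal coefficient theorem \eqref{eq UCT} together with the Connes--Kasparov isomorphism, each group $K^{d_j}(C^*_rG_j)$ is identified with $\Hom_{\Z}(K_{d_j}(C^*_rG_j),\Z) \cong \Hom_{\Z}(R(K_j),\Z)$, with dual basis $\{[\lambda_j]^*\}_{\lambda_j \in \Ldom^{K_j}}$; and likewise for the product group, where by the remark in Lemma~\ref{lem mult Kthry} one has $\Ldom^{K_1 \times K_2} = \Ldom^{K_1} \times \Ldom^{K_2}$, so that $K^{d_1+d_2}(C^*_r(G_1 \times G_2))$ has dual basis $\{[(\lambda_1,\lambda_2)]^*\}$.

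Next I would pin down the compatibility of the Kasparov product \eqref{eq Kasp Khom} with these pairings: for $x_j \in K^{d_j}(C^*_rG_j)$ and $y_j \in K_{d_j}(C^*_rG_j)$, the product $x_1 \times x_2$ pairs with $y_1 \times y_2 \in K_{d_1+d_2}(C^*_r(G_1\times G_2))$ by $(x_1 \times x_2)(y_1 \times y_2) = x_1(y_1)\cdot x_2(y_2)$. This is a standard associativity/compatibility property of the Kasparov product (the external product on $K$-homology is adjoint to the external product on $K$-theory), and since $K_{d_1}(C^*_rG_1) \times K_{d_2}(C^*_rG_2) \to K_{d_1+d_2}(C^*_r(G_1\times G_2))$ hits a basis of the target by Lemma~\ref{lem mult Kthry}, this determines $x_1 \times x_2$ completely. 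Applying this with $x_j = [\lambda_j]^*$, and using Lemma~\ref{lem mult Kthry} to write $[\mu_1] \times [\mu_2] = [(\mu_1,\mu_2)]$, I get
\[
\bigl([\lambda_1]^* \times [\lambda_2]^*\bigr)\bigl([(\mu_1,\mu_2)]\bigr) = [\lambda_1]^*([\mu_1]) \cdot [\lambda_2]^*([\mu_2]) = \delta_{\lambda_1 \mu_1}\,\delta_{\lambda_2 \mu_2} = \delta_{(\lambda_1,\lambda_2),(\mu_1,\mu_2)},
\]
which is exactly the defining property of $[(\lambda_1,\lambda_2)]^*$. Since $\{[(\mu_1,\mu_2)]\}$ is a $\Z$-basis of $K_{d_1+d_2}(C^*_r(G_1\times G_2))$ and the pairing is perfect, this forces $[\lambda_1]^* \times [\lambda_2]^* = [(\lambda_1,\lambda_2)]^*$.

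The main obstacle I anticipate is not the algebra of dual bases, which is routine, but making precise the statement that the external Kasparov product on $K$-homology is adjoint to the external product on $K$-theory under the UCT pairing — i.e.\ verifying the displayed compatibility $(x_1 \times x_2)(y_1 \times y_2) = x_1(y_1) x_2(y_2)$. This is essentially the associativity and functoriality of the Kasparov product (one expresses the pairing $K^d(A) \times K_d(A) \to \Z$ as a Kasparov product landing in $KK(\C,\C) = \Z$, and then both sides of the identity become the same iterated Kasparov product over $A_1 \otimes A_2$), but one should cite it carefully — e.g.\ from Kasparov's work or Blackadar's book — rather than reprove it. Everything else follows formally once this compatibility and Lemma~\ref{lem mult Kthry} are in hand.
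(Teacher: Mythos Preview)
Your proposal is correct and follows essentially the same route as the paper: both arguments pair $[\lambda_1]^*\times[\lambda_2]^*$ against the basis $[(\mu_1,\mu_2)]$, invoke the associativity of the Kasparov product to get $(x_1\times x_2)(y_1\times y_2)=x_1(y_1)\,x_2(y_2)$, and then use Lemma~\ref{lem mult Kthry} together with the perfect pairing from \eqref{eq UCT} to identify the result with $[(\lambda_1,\lambda_2)]^*$. The paper is slightly terser about the compatibility step (it just writes out the chain of Kasparov products and cites ``associativity properties''), whereas you flag it explicitly as the one point deserving a reference; that caution is well placed but does not change the argument.
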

\begin{proof}
Let $\lambda_j, \mu_j \in \Ldom^{K_j}$. Then
\begin{equation} \label{eq lambda mu}
 [(\lambda_1, \lambda_2)]^* \bigl( [(\mu_1, \mu_2)] \bigr) = \delta_{\lambda_1 \mu_1} \delta_{\lambda_2 \mu_2} = 
 	[\lambda_1]^*\bigl( [\mu_1] \bigr) \cdot [\lambda_2]^*\bigl( [\mu_2] \bigr).
\end{equation}
The isomorphism \eqref{eq UCT} is induced by the Kasparov product, i.e.\  for $\lambda \in \Ldom^K$, the homomorphism
\[
[\lambda]^*: \KK(\C, C^*_rG) \to \Z
\]
is given by taking the Kasparov product with $[\lambda]^* \in \KK(C^*_rG, \C)$. Hence the right hand side of \eqref{eq lambda mu} equals
\[
\bigl( [\mu_1]\times_{C^*_rG_1} [\lambda_1]^* \bigr) \cdot \bigl( [\mu_2]\times_{C^*_rG_2} [\lambda_2]^* \bigr) = 
	\bigl( [\mu_1] \times [\mu_2] \bigr) \times_{C^*_rG_1 \otimes C^*_rG_2} \bigl( [\lambda_1]^* \times [\lambda_2]^* \bigr),
\]
where we have used the associativity properties of the Kasparov product.
By Lemma \ref{lem mult Kthry}, the latter expression equals
\[
 [(\mu_1, \mu_2)] \times_{C^*_rG_1 \otimes C^*_rG_2} \bigl( [\lambda_1]^* \times [\lambda_2]^* \bigr) =
	 \bigl( [\lambda_1]^* \times [\lambda_2]^* \bigr) \bigl( [(\mu_1, \mu_2)] \bigr).
\]
\end{proof}

\section{Module structures}

Corollary \ref{cor mult Khom} implies that formal quantisation is multiplicative. For $j = 1,2$, let $(M_j, \omega_j)$ be equivariantly prequantised proper Hamiltonian $G_j$-manifolds, with $G_j$-proper momentum maps. Suppose the groups $G_j$ are connected and semisimple.
\begin{corollary} \label{cor mult}
One has
\[
\Qform_{G_1 \times G_2}(M_1 \times M_2, \omega_1 \times \omega_2) = \Qform_{G_1}(M_1, \omega_1) \times \Qform_{G_2}(M_2, \omega_2) \quad \in K^{d_1+d_2}(C^*_r(G_1 \times G_2)). 
\]
\end{corollary}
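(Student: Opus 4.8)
The plan is to reduce the statement directly to the multiplicativity of the generators established in Corollary~\ref{cor mult Khom}, combined with the multiplicativity of symplectic reduction. First I would unwind the definition of formal quantisation on the product: by definition,
\[
\Qform_{G_1 \times G_2}(M_1 \times M_2, \omega_1 \times \omega_2) = \sum_{(\lambda_1, \lambda_2) \in \Ldom^{K_1 \times K_2}} Q\bigl((M_1 \times M_2)_{(\lambda_1, \lambda_2)}\bigr) [(\lambda_1, \lambda_2)]^*,
\]
using that $\Ldom^{K_1 \times K_2} = \Ldom^{K_1} \times \Ldom^{K_2}$ and that the momentum map for the $G_1 \times G_2$-action on $M_1 \times M_2$ is $\Phi^{G_1} \times \Phi^{G_2}$, which is $(G_1\times G_2)$-proper because each factor is $G_j$-proper.

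The key geometric input is that symplectic reduction commutes with products: for $(\lambda_1, \lambda_2) \in \Ldom^{K_1} \times \Ldom^{K_2}$, one has $(M_1 \times M_2)_{(\lambda_1, \lambda_2)} \cong (M_1)_{\lambda_1} \times (M_2)_{\lambda_2}$ as (pre)symplectic orbifolds, since $(\Phi^{G_1} \times \Phi^{G_2})^{-1}\bigl((G_1 \times G_2)\cdot(\lambda_1/i, \lambda_2/i)\bigr)$ factors as a product. Together with multiplicativity of the quantisation of compact (orbifold) symplectic manifolds — $Q(P_1 \times P_2) = Q(P_1)\, Q(P_2)$, which follows from the product formula for the index of Dolbeault-type/Spin$^c$ Dirac operators — this gives $Q\bigl((M_1 \times M_2)_{(\lambda_1, \lambda_2)}\bigr) = Q\bigl((M_1)_{\lambda_1}\bigr)\, Q\bigl((M_2)_{\lambda_2}\bigr)$. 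I would note the caveat that the reduced spaces must have well-defined quantisations; this is part of the standing assumptions, and I would remark that quasi-regularity and the local triviality condition on the prequantum line bundle are inherited by products.

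Combining these, the sum becomes
\[
\sum_{\lambda_1 \in \Ldom^{K_1}} \sum_{\lambda_2 \in \Ldom^{K_2}} Q\bigl((M_1)_{\lambda_1}\bigr)\, Q\bigl((M_2)_{\lambda_2}\bigr)\, [(\lambda_1, \lambda_2)]^*,
\]
and applying Corollary~\ref{cor mult Khom} to replace $[(\lambda_1, \lambda_2)]^*$ by $[\lambda_1]^* \times [\lambda_2]^*$, bilinearity of the Kasparov product $\times$ in \eqref{eq Kasp Khom} lets me factor the double sum as $\bigl(\sum_{\lambda_1} Q((M_1)_{\lambda_1}) [\lambda_1]^*\bigr) \times \bigl(\sum_{\lambda_2} Q((M_2)_{\lambda_2}) [\lambda_2]^*\bigr) = \Qform_{G_1}(M_1, \omega_1) \times \Qform_{G_2}(M_2, \omega_2)$.

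The main obstacle is not any single step but a convergence/well-definedness subtlety: the sums are infinite, so passing the product $\times$ through the double sum requires knowing that the pairing of the product class with any fixed generator $[(\mu_1, \mu_2)]$ picks out only finitely many terms — which it does, since $[(\lambda_1,\lambda_2)]^*([(\mu_1,\mu_2)]) = \delta_{\lambda_1\mu_1}\delta_{\lambda_2\mu_2}$, so in fact exactly one term survives. Thus the identity can be checked termwise after pairing against the generators $[(\mu_1, \mu_2)]$ of $K_{d_1+d_2}(C^*_r(G_1 \times G_2))$, which reduces the whole corollary to the scalar identity $Q((M_1)_{\mu_1})\,Q((M_2)_{\mu_2}) = Q((M_1 \times M_2)_{(\mu_1,\mu_2)})$ together with Corollary~\ref{cor mult Khom}. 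I expect this pairing argument to be the cleanest way to make the manipulation of infinite sums rigorous.
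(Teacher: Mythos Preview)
Your proposal is correct and follows essentially the same route as the paper: expand the definition, use that symplectic reduction is compatible with products and that quantisation of compact reduced spaces is multiplicative, then invoke Corollary~\ref{cor mult Khom} to factor the generators and hence the double sum. Your added remark on handling the infinite sums by pairing against generators is a welcome clarification that the paper leaves implicit.
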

\begin{proof}
Let $\lambda_j \in \Ldom^{K_j}$. As noted by Paradan \cite{Paradan1}, one has an equality of symplectic reductions
\[
\bigl((M_1 \times M_2)_{(\lambda_1, \lambda_2)}, (\omega_1 \times \omega_2)_{(\lambda_1, \lambda_2)} \bigr) \cong \bigl( (M_1)_{\lambda_1} \times (M_2)_{\lambda_2}, (\omega_1)_{\lambda_1} \times (\omega_2)_{\lambda_2} \bigr),
\]
if $\lambda_1/i$ and $\lambda_2/i$ are regular values of the respective momentum maps.

Since the manifolds $(M_j)_{\lambda_j}$ are compact, one has
\[
Q\bigl((M_1)_{\lambda_1}\bigr) Q\bigl((M_2)_{\lambda_2} \bigr) = 
Q\bigl((M_1)_{\lambda_1} \times (M_2)_{\lambda_2} \bigr) \quad \in \Z.
\]
Hence, because $\Ldom^{K_1 \times K_2} = \Ldom^{K_1} \times \Ldom^{K_2} =: \Ldom$, Corollary \ref{cor mult Khom} implies that
\[
\begin{split}
\Qform_{G_1 \times G_2}(M_1 \times M_2, \omega_1 \times \omega_2) 
	&= \sum_{\scriptscriptstyle(\lambda_1, \lambda_2) \in   \Ldom} Q\bigl( (M_1 \times M_2)_{(\lambda_1, \lambda_2)}  \bigr)  [(\lambda_1, \lambda_2)]^* \\
& = \sum_{\scriptscriptstyle(\lambda_1, \lambda_2) \in  \Ldom}  Q\bigl((M_1)_{\lambda_1} \bigr) Q\bigl((M_2)_{\lambda_2} \bigr) [\lambda_1]^* \times [\lambda_2]^* \\
&= \Qform_{G_1}(M_1, \omega_1) \times \Qform_{G_2}(M_2, \omega_2). 
\end{split}
\]
\end{proof}

The compatibility property \eqref{eq quant ring str} of formal quantisation with the $R(K)$-module structure on $R^{-\infty}(K)$ can be generalised to noncompact groups. It is possible to equip $K^d(C^*_rG)$ with a $K_d(C^*_rG)$-module structure in the following way.  For $a \in K_d(C^*_rG)$ we have $a^* \in K^d(C^*_rG)$, via the inclusion map defined by $[\lambda] \mapsto [\lambda]^*$ on generators. If $b \in K^d(C^*_rG)$, then we have
\[
a^* \times b \in K^0(C^*_r(G\times G)),
\]
where $\times$ denotes the Kasparov product \eqref{eq Kasp Khom}. Corollary \ref{cor mult Khom} implies that
\begin{multline*}
\bigl(\DInd_{K \times K}^{G \times G} \bigr)^* (a^* \times b ) = \bigl(\DInd_K^G \bigr)^* (a^*) \otimes \bigl(\DInd_K^G \bigr)^* (b)  \\
\in R(K) \otimes R^{-\infty}(K) \subset R(K\times K)_{\Delta(K)}.
\end{multline*}
where $\Delta(K) < K \times K$ is the diagonal subgroup.  So
\[
a^* \times b \in K^0(C^*_r (G \times G) )_{\Delta(G)},
\]
which is the domain of the Dirac restriction map
\[
\DRes^{G \times G}_{\Delta(G)} :K^0(C^*_r (G \times G) )_{\Delta(G)} \to K^d(C^*_rG).
\]
\begin{definition} \label{def mod str}
The $K_d(C^*_rG)$-module structure of $K^d(C^*_rG)$ is defined by
\begin{equation} \label{eq mod str}
a\cdot b := \DRes^{G \times G}_{\Delta(G)}(a^* \times b),
\end{equation}
for $a$ and $b$ as above.
\end{definition}

\begin{lemma} \label{lem iso mod}
Dirac induction is compatible with the $R(K)$-module structure of $R^{-\infty}(K)$ and the  $K_d(C^*_rG)$-module structure of $K^d(C^*_rG)$, in the sense that
\[
\bigl(\DInd_K^G \bigr)^* \bigl(  \DInd_K^G[\pi] \cdot b \bigr) = [\pi] \cdot \bigl(\DInd_K^G \bigr)^*(b),
\]
for all finite-dimensional representations $\pi$ of $K$, and all $b \in K^d(C^*_rG)$.
\end{lemma}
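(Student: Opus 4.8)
The plan is to unwind both sides of the claimed identity through the definitions and reduce everything to Corollary \ref{cor mult Khom} together with the compatibility of Dirac restriction with the ordinary restriction map $\Res^K_{K'}$ built into the definition of $\DRes$. Write $a := \DInd_K^G[\pi] \in K_d(C^*_rG)$ and fix $b \in K^d(C^*_rG)$. By Definition \ref{def mod str}, the left-hand side is
\[
\bigl(\DInd_K^G \bigr)^* \Bigl( \DRes^{G \times G}_{\Delta(G)}\bigl( a^* \times b \bigr) \Bigr).
\]
Since $\Delta(G)$ has maximal compact subgroup $\Delta(K)$ sitting inside $K \times K$, I would apply the defining commutative square of $\DRes^{G\times G}_{\Delta(G)}$: the map $\bigl(\DInd^{\Delta(G)}_{\Delta(K)}\bigr)^* \circ \DRes^{G\times G}_{\Delta(G)}$ equals $\Res^{K \times K}_{\Delta(K)} \circ \bigl(\DInd^{G \times G}_{K \times K}\bigr)^*$. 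Under the identification $\Delta(K) \cong K$, the map $\bigl(\DInd^{\Delta(G)}_{\Delta(K)}\bigr)^*$ is just $\bigl(\DInd_K^G\bigr)^*$, so the left-hand side becomes
\[
\Res^{K \times K}_{\Delta(K)} \Bigl( \bigl(\DInd^{G \times G}_{K \times K}\bigr)^*\bigl( a^* \times b \bigr) \Bigr).
\]

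Next I would invoke Corollary \ref{cor mult Khom} (in the form already recorded just before Definition \ref{def mod str}): $\bigl(\DInd^{G\times G}_{K\times K}\bigr)^*(a^* \times b) = \bigl(\DInd_K^G\bigr)^*(a^*) \otimes \bigl(\DInd_K^G\bigr)^*(b)$ in $R(K) \otimes R^{-\infty}(K) \subset R(K \times K)_{\Delta(K)}$. By \eqref{eq Dindstar lambda} and the inclusion $[\lambda] \mapsto [\lambda]^*$, one has $\bigl(\DInd_K^G\bigr)^*(a^*) = [\pi]^*$, which under the identification of $R(K) \subset R^{-\infty}(K)$ with representations is just $[\pi]$ again. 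So the expression is $\Res^{K\times K}_{\Delta(K)}\bigl([\pi] \otimes \bigl(\DInd_K^G\bigr)^*(b)\bigr)$, and restricting a tensor product of $K\times K$-representations along the diagonal is by definition the tensor product of representations of $K$, i.e.\ the $R(K)$-module action: this equals $[\pi] \cdot \bigl(\DInd_K^G\bigr)^*(b)$, which is the right-hand side. That proves the identity.

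The main obstacle I anticipate is bookkeeping around the dualization maps $(\,\cdot\,)^*$ and the various identifications $R(K) \subset R^{-\infty}(K)_{K'} \subset R^{-\infty}(K)$ and $K_d(C^*_rG) \subset K^d(C^*_rG)$ — in particular checking that forming $a^*$ and then applying $\bigl(\DInd^{G\times G}_{K\times K}\bigr)^*$ really does produce the honest representation $[\pi]$ in the first tensor factor (rather than its dual or its image only in $R^{-\infty}$), and that $\Res^{K\times K}_{\Delta(K)}$ restricted to $R(K)\otimes R^{-\infty}(K)$ genuinely coincides with the tensor-product $R(K)$-module structure on $R^{-\infty}(K)$. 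Both are essentially definitional, but one must confirm that the conventions in Section~1 (the map $[\pi^K_\lambda]^* \mapsto [\pi^K_\lambda]$ identifying $R^{-\infty}(K)$ with formal differences of representations with finite multiplicities) are applied consistently so that no spurious signs or dualities are introduced. A secondary point to verify is that $a^* \times b$ indeed lands in $K^0(C^*_r(G\times G))_{\Delta(G)}$ so that $\DRes^{G\times G}_{\Delta(G)}$ is defined on it — but this is exactly what the computation preceding Definition \ref{def mod str} established, since $R(K) \otimes R^{-\infty}(K) \subset R(K\times K)_{\Delta(K)}$.
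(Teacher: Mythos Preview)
Your proposal is correct and follows essentially the same route as the paper's proof: both unwind $\DRes^{G\times G}_{\Delta(G)}$ via its defining square, invoke Corollary \ref{cor mult Khom} (you use its consequence recorded just before Definition \ref{def mod str}, the paper applies it directly to $[\lambda_1]^* \times [\lambda_2]^*$), and then identify $\Res^{K\times K}_{\Delta(K)}$ of a tensor with the $R(K)$-module action. The only cosmetic difference is that the paper first reduces to irreducible $\pi = \pi^K_{\lambda_1}$ and generators $b = [\lambda_2]^*$ by bilinearity, whereas you argue for general $\pi$ and $b$ from the start; the content is the same, and the bookkeeping concerns you flag are exactly the identifications the paper tacitly uses.
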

\begin{remark}
This lemma in particular implies that \eqref{eq mod str} indeed defines a module structure.
\end{remark}
\noindent
\emph{Proof of Lemma \ref{lem iso mod}.}
It is enough to check the equality for irreducible $\pi = \pi^K_{\lambda_1}$ and generators $b = [\lambda_2]^*$ of $K^d(C^*_rG)$, for $\lambda_j \in \Ldom^K$. Then, using Corollary \ref{cor mult Khom} and  \eqref{eq Dindstar lambda}, one finds that
\[
\begin{split}
\bigl(\DInd_K^G \bigr)^* \bigl(  \DInd_K^G[\pi] \cdot b \bigr) 
&= \bigl(\DInd_K^G \bigr)^* \circ \DRes^{G \times G}_G \bigl( [\lambda_1]^* \times [\lambda_2]^* \bigr) \\
&= \bigl(\DInd_K^G \bigr)^* \circ \DRes^{G \times G}_G [(\lambda_1, \lambda_2)]^* \\
&= \Res^{K \times K}_K \circ \bigl(\DInd_{K\times K}^{G\times G} \bigr)^* [(\lambda_1, \lambda_2)]^* \\
&= \Res^{K \times K}_K \bigl[ \pi^{K\times K}_{(\lambda_1, \lambda_2)} \bigr]^* \\
&= \Res^{K \times K}_K \bigl[ \pi^K_{\lambda_1} \otimes \pi^K_{\lambda_2}\bigr]^* \\
&= [\pi^K_{\lambda_1}] \cdot [\pi^K_{\lambda_2}]^* \\
&= [\pi] \cdot \bigl(\DInd_K^G \bigr)^*(b).
\end{split}
\]
\hfill $\square$
\medskip

Proposition \ref{prop restr},  Corollary \ref{cor mult} and Lemma \ref{lem iso mod} have the following consequence. Let  $(N_j, \nu_j)$ be
prequantised Hamiltonian $K$-manifolds. Suppose $N_1$ is compact, and the momentum map for the action by $K$ on $N_2$ is proper. Then, with respect to the module structure of Definition \ref{def mod str},
\[
\Qform_G(G\times_K N_1) \cdot \Qform_G(G\times_K N_2) = \Qform_G\bigl(G\times_K(N_1 \times N_2)\bigr).
\]

\section{The shifting trick}

As in \cite{HM}, consider a $G$-invariant metric on the trivial bundle $M \times \kg^* \to M$, equipped with the $G$-action
\[
g\cdot (m, \xi) = (g\cdot m, \Ad^*(g)\xi),
\]
for $g \in G$, $m \in M$ and $\xi \in \kg^*$. Denote the induced norm on the fibre at $m$ by $\|\cdot\|_m$. Let $\HH$ be the associated norm-squared function of the momentum map $\Phi^G$:
\[
\HH(m) = \|\Phi^G(m)\|_m^2.
\]
Consider the one-form $d_1\HH \in \Omega^1(M)$ defined by
\[
(d_1\HH)_m = d_m\bigl(m' \mapsto   \|\Phi^G(m')\|_m^2\bigr).
\]
Let $\Crit_1(\HH)$ be the set of zeroes of $d_1\HH$. Under the assumptions that $\Crit_1(\HH)/G$ is compact and $G$ is unimodular, the invariant quantisation
\[
Q(M, \omega)^G \quad \in \Z
\]
was defined in \cite{HM}. It was proved that for $p \in \N$ large enough,
\begin{equation} \label{eq [Q,R]=0 zero}
Q(M, p\omega)^G = Q(M_0, p\omega_0),
\end{equation}
and conjectured that this equality holds for $p = 1$.

Let $\lambda \in \Ldom^K$. Consider the reduction map $R^G_{\lambda}: K^d(C^*_rG) \to \Z$ given by taking the multiplicity of $[\lambda]^*$ (i.e.\ by applying elements to $[\lambda]$). Let $\cO_{\lambda}^-:= \Ad^*(G)\lambda/i$ be the coadjoint orbit through $\lambda/i$, equipped with minus the standard Kirillov--Kostant--Souriau symplectic form. Let $\omega^{\lambda} \in \Omega^2(M \times \cO_{\lambda}^-)$ be the induced product symplectic form.
Let $\HH_{\lambda}$ be the function $\HH$ defined above, for the diagonal action by $G$ on $M \times \cO_{\lambda}^-$. Suppose the conjecture that \eqref{eq [Q,R]=0 zero} holds for $p=1$ is true, which is the case for example if the action is free. 
Then one has the following version of the shifting trick.
\begin{proposition} \label{prop shift}
If $\Crit_1(\HH_{\lambda})/G$ is compact, then, if $\lambda/i$ is a regular value of $\Phi^G$, 
\[
R^G_{\lambda} \bigl( \Qform_G(M, \omega) \bigr)= Q\bigl(M\times \cO_{\lambda}^-,  \omega^{\lambda}\bigr)^G.
\] 
\end{proposition}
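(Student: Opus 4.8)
The plan is to combine the classical shifting trick in symplectic geometry with the definition of formal quantisation and the invariant quantisation result \eqref{eq [Q,R]=0 zero} (in the $p=1$ form we are assuming). First I would recall that the standard shifting trick identifies the symplectic reduction of $M$ at $\lambda/i$ with the symplectic reduction of $M \times \cO_{\lambda}^-$ at $0$: one has a symplectomorphism of (orbifold) reduced spaces
\[
M_{\lambda} \cong \bigl(M \times \cO_{\lambda}^-\bigr)_0,
\]
compatible with the induced prequantum data, where on the right $\omega^{\lambda}$ is used. This is the reason $\cO_{\lambda}^-$ is taken with \emph{minus} the KKS form. Under the assumption that $\lambda/i$ is a regular value of $\Phi^G$, both reduced spaces are compact symplectic orbifolds with induced prequantum line bundles, so by functoriality of $\Spin^c$- (or Dolbeault-) quantisation under such isomorphisms we get
\[
Q(M_{\lambda}) = Q\bigl((M\times \cO_{\lambda}^-)_0, \omega^{\lambda}_0\bigr).
\]

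Next I would unwind the left-hand side of the proposition. By the definition of $\Qform_G(M,\omega)$ as $\sum_{\mu \in \Ldom^K} Q(M_{\mu}) [\mu]^*$, and since $R^G_{\lambda}$ is by construction the functional "take the coefficient of $[\lambda]^*$", i.e.\ evaluation on the generator $[\lambda]$ paired against the $[\mu]^*$ via $[\mu]^*([\lambda]) = \delta_{\mu\lambda}$, we obtain immediately
\[
R^G_{\lambda}\bigl(\Qform_G(M,\omega)\bigr) = Q(M_{\lambda}).
\]
Then I would invoke \eqref{eq [Q,R]=0 zero} with $p = 1$ (valid under the stated hypothesis), applied to the Hamiltonian $G$-manifold $M \times \cO_{\lambda}^-$ with presymplectic form $\omega^{\lambda}$: the hypothesis that $\Crit_1(\HH_{\lambda})/G$ is compact is exactly what is needed to define $Q(M\times\cO_{\lambda}^-,\omega^{\lambda})^G$ and to apply that theorem, which gives
\[
Q\bigl(M\times\cO_{\lambda}^-,\omega^{\lambda}\bigr)^G = Q\bigl((M\times\cO_{\lambda}^-)_0, \omega^{\lambda}_0\bigr).
\]
Chaining the three displayed equalities yields the claim.

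The main obstacle I anticipate is not any of the individual steps but checking that the hypotheses line up so that the cited results genuinely apply: one must verify that $M \times \cO_{\lambda}^-$ with $\omega^{\lambda}$ falls within the class of (possibly presymplectic) manifolds for which the invariant quantisation $Q(\,\cdot\,)^G$ of \cite{HM} is defined — in particular that $G$ acts properly on it (which follows from properness of the $G$-action on $M$, since $\cO_{\lambda}^-$ is a single orbit) and that $G$ is unimodular (automatic here, as $G$ is semisimple, hence unimodular). One also needs the reduced space $(M\times\cO_{\lambda}^-)_0$ to be an honest compact symplectic orbifold, which is where the regularity hypothesis on $\lambda/i$ enters: regularity of $\lambda/i$ as a value of $\Phi^G$ should be translated into $0$ being a suitable (quasi-)regular value of the momentum map for the diagonal action on $M\times\cO_{\lambda}^-$, using the explicit form $\Phi^G(m) - \eta$ of the latter momentum map on $M\times\cO_{\lambda}^-$. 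Finally, one must confirm that the shifting-trick symplectomorphism intertwines the prequantum line bundles so that the quantisations — defined here via indices of Dirac operators twisted by these bundles — agree; this is standard but deserves an explicit pointer.
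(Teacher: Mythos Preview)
Your proposal is correct and follows the natural line of argument: unwind $R^G_{\lambda}$ on the formal quantisation to get $Q(M_{\lambda})$, invoke the shifting-trick symplectomorphism $M_{\lambda} \cong (M\times\cO_{\lambda}^-)_0$ (with matching prequantum data), and then apply \eqref{eq [Q,R]=0 zero} with $p=1$ to the product. The paper itself does not spell this out but simply refers to Corollary~5.12 in \cite{HM}; your outline is precisely the content one would expect that corollary (or its proof) to contain, and the hypothesis checks you flag---properness of the diagonal action, unimodularity of $G$, and translating regularity of $\lambda/i$ for $\Phi^G$ into regularity of $0$ for the shifted momentum map---are exactly the points that need attention.
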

\begin{proof}
See Corollary 5.12 in \cite{HM}.
\end{proof}

Under the stronger assumption (which may be restrictive) that $\Crit_1(\HH_{\lambda})/G$ is compact  for \emph{all} $\lambda$,
one can define a \emph{semi-formal} version of quantisation as
\[
Q^{\text{semi}}_G(M, \omega) =  \sum_{\lambda \in \Ldom^K} Q\bigl(M\times \cO_{\lambda}^-,  \omega^{\lambda}\bigr)^G [\lambda]^*.
\]
This version of quantisation has the advantage that it is well-defined regardless of how singular the reduced spaces $M_{\lambda}$ may be. In the special case where $\lambda/i$ is a regular value of $\Phi^G$ for all $\lambda$, then Proposition \ref{prop shift} implies that semi-formal quantisation commutes with reduction, in the sense that $Q^{\text{semi}}_G(M, \omega) = \Qform_G(M, \omega)$. (But note that this is only true if \eqref{eq [Q,R]=0 zero} holds for $p=1$.)

\end{document}